\date{} 
\begin{document} 
\centerline {\bf {An Element $\displaystyle \phi$-$\delta$-Primary  to another Element in Multiplicative Lattices} }
\centerline{} 

\centerline{\bf {A.~V.~Bingi}}
\centerline{Department of Mathematics}
\centerline{St.~Xavier's College(autonomous), Mumbai-400001, India}
\centerline{$email: ashok.bingi@xaviers.edu$}

\newtheorem{thm}{Theorem}[section]
 \newtheorem{c1}[thm]{Corollary}
 \newtheorem{l1}[thm]{Lemma}
 \newtheorem{prop}[thm]{Proposition}
 \newtheorem{d1}[thm]{Definition}
\newtheorem{rem}[thm]{Remark}
 \newtheorem{e1}[thm]{Example}
\begin{abstract}
  In this paper, we introduce an element $\phi$-$\delta$-primary to another element in a compactly generated multiplicative lattice $L$ and obtain its characterizations. We prove many of its properties and investigate the relations between these structures. By a counter example, it is shown that if an element $b\in L$ is $\phi$-$\delta$-primary to a proper element $p\in L$ then $b$ need not be $\delta$-primary to $p$ and found conditions under which an element $b\in L$ is $\delta$-primary to a proper element $p\in L$ if $b$ is $\phi$-$\delta$-primary to  $p$.
\end{abstract}

{\bf 2010 Mathematics Subject Classification:} 06B99 
\paragraph*{}
{\bf Keywords:-} expansion function,  $\delta$-primary to another element, $\phi$-$\delta$-primary to another element, 2-potent $\delta$-primary to another element, $n$-potent $\delta$-primary to another element, idempotent element

\section{Introduction}
\paragraph*{}  
The notion of an element prime to another element in a multiplicative lattice $L$ is introduced by F.~Alarcon et.~al. in \cite{AAJ}. Further, the notion of an element primary to another element in a multiplicative lattice $L$ is introduced by C.~S.~Manjarekar and Nitin S.~Chavan in \cite{MC}. In an attempt to unify these notions of an element prime to another element and an element primary to another element in a multiplicative lattice $L$ under one frame,  an element $\delta$-primary to another element in a multiplicative lattice $L$ is introduced by Ashok V. Bingi in \cite{A1}. 

Further, the concept of an element weakly prime to another element and  an element weakly primary to another element in a multiplicative lattice $L$ is introduced by C.~S.~Manjarekar and U.~N.~Kandale in \cite{MK}. To generalise these concepts, the study of  an element $\phi$-prime to another element and an element $\phi$-primary to another element in a multiplicative lattice $L$ is done by Ashok V. Bingi in \cite{A2}.  In this paper, we introduce and study, the notion of an element $\phi$-$\delta$-primary to another element in a multiplicative lattice $L$ as a generalization of an element $\delta$-primary to another element in $L$ and unify an element $\phi$-prime to another element and an element $\phi$-primary to another element in $L$, under one frame.

A multiplicative lattice $L$ is a complete lattice provided with commutative, associative and join distributive multiplication in which the largest element $1$ acts as a multiplicative identity. An element $e\in L$ is called meet principal if $a\wedge be=((a:e)\wedge b)e$ for all $a,\ b\in L$. An element $e\in L$ is called join principal if $(ae\vee b):e=(b:e)\vee a$ for all $a,\ b\in L$. An element $e\in L$ is called principal if $e$ is both meet principal and join principal. A multiplicative lattice $L$ is said to be principally generated(PG) if every element of $L$ is a join of principal elements of $L$. An element $a\in L$ is called compact if for  $X\subseteq L$, $a\leqslant \vee X$  implies the existence of a finite number of elements $a_1,a_2,\cdot\cdot\cdot,a_n$ in $X$ such that $a\leqslant a_1\vee a_2\vee\cdot\cdot\cdot\vee a_n$. The set of compact elements of $L$ will be denoted by $L_\ast$. If each element of $L$ is a join of compact elements of $L$ then $L$ is called a compactly generated lattice or simply a CG-lattice. 
       
An element $a\in L$ is said to be proper if $a<1$. The radical of $a\in L$ is denoted by $\sqrt{a}$ and is defined as $\vee \{ x \in L_\ast \mid x^{n} \leqslant a$, for  some  $n\in Z_+\}$. A proper element $m\in L$ is said to be maximal if for every element $x\in L$ such that $m<x\leqslant 1$ implies $x=1$. A proper element $p\in L$ is called a prime element if $ab\leqslant p$ implies $a\leqslant p$ or $b\leqslant p$ where $a,b\in L$ and is called a primary element if $ab\leqslant p$ implies $a\leqslant p$ or $b\leqslant \sqrt{p}$ where $a,b\in L_\ast$. For $a,b\in L $, $(a:b)= \vee \{x \in L \mid xb \leqslant a\} $. A multiplicative lattice is called as a Noether lattice if it is modular, principally generated and satisfies ascending chain condition.  An element $a\in L$ is called a zero divisor if $ab=0$ for some $0\neq b\in L$ and is called idempotent if $a=a^2$. A multiplicative lattice is said to be a domain if it is without zero divisors and is said to be quasi-local if it contains a unique maximal element. A quasi-local multiplicative lattice $L$ with maximal element $m$ is denoted by $(L,\ m)$. A Noether lattice $L$ is local if it contains precisely one maximal prime. In a Noether lattice $L$, an element $a\in L$ is said to satisfy  restricted cancellation law if for all $b,\ c\in L$, $ab=ac\neq 0$ implies $b=c$ (see $\cite{W}$). According to \cite{MB1}, an expansion function on $L$ is a function $\delta:L \longrightarrow L$ which satisfies the following two conditions: \textcircled{1}. $a\leqslant\delta(a)$ for all $a\in L$,
\textcircled{2}. $a\leqslant b$ implies $\delta(a)\leqslant\delta(b)$ for all $a,\ b\in L$.  The reader is referred to \cite{AAJ} for general background and terminology in multiplicative lattices. 

According to \cite{AAJ}, an element $b\in L$ is  said to be prime to a proper element $p\in L$  if $xb\leqslant p$ implies $x\leqslant p$ where $x\in L$. According to \cite{MC}, an element $b\in L$ is  said to be primary to a proper element $p\in L$  if $xb\leqslant p$ implies $x\leqslant \sqrt{p}$ where $x\in L_\ast$. According to \cite{MK}, an element $b\in L$ is  said to be weakly prime to a proper element $p\in L$  if $0\neq xb\leqslant p$ implies $x\leqslant p$ where $x\in L$ and an element $b\in L$ is  said to be weakly primary to a proper element $p\in L$  if $0\neq xb\leqslant p$ implies $x\leqslant \sqrt{p}$ where $x\in L_\ast$.

Further, according to \cite{A1}, given an expansion function $\delta$ on $L$, an element $b\in L$ is said to be  $\delta$-primary to a proper element $p\in L$  if for all $x\in L$, $xb\leqslant p$ implies  $x\leqslant\delta(p)$.  According to \cite{A2}, given a function  $\phi:L\longrightarrow L$, an element $b\in L$ is said to be $\phi$-prime to a proper element $p\in L$ if for all $x\in L$, $xb\leqslant p$ and $xb\nleqslant\phi(p)$ implies $x\leqslant p$ and  an element $b\in L$ is said to be $\phi$-primary to a proper element $p\in L$ if for all $x\in L$, $xb\leqslant p$ and $xb\nleqslant\phi(p)$ implies $x\leqslant \sqrt{p}$.      
             
        In this paper, we define an element $\phi$-$\delta$-primary to another element in $L$ and obtain their characterizations. The notopn of an element $\phi_\alpha$-$\delta$-primary to another element in $L$ is introduced and relations among them are obtained. By counter examples, it is shown that if $b\in L$ is $\phi$-$\delta$-primary to a proper element of $p\in L$ then $b$ need not be $\phi$-prime to $p$, $b$ need not be prime to $p$ and $b$ need not be $\delta$-primary to $p$. In 6 different ways, we have proved if an element $b\in L$ is $\phi$-$\delta$-primary to a proper element $p$ then $b$ is $\delta$-primary to $p$ under certain conditions. We define an element $2$-potent $\delta$-primary to another element of $L$ and an element $n$-potent $\delta$-primary to another element of $L$. Finally, we show that for an idempotent element  $p\in L$, $b\in L$ is $\phi_2$-$\delta$-primary to $p$ but if $b\in L$ is $\phi_2$-$\delta$-primary to a proper element $p\in L$ then $p$ need not be idempotent. Throughout this paper, \textcircled{1}. $L$ denotes a compactly generated multiplicative lattice with $1$ compact in which every finite product of compact elements is compact, \textcircled{2}. $\delta$ denotes an expansion function on $L$ and \textcircled{3}. $\phi$ denotes a function defined on $L$.
         
         \medskip
         
       \section{An element $\displaystyle \phi$-$\delta$-primary to another element in $L$}
       
       \paragraph*{}        
       
           We begin with introducing the notion of  an element of $L$ to be  $\phi$-$\delta$-primary  to another element of $L$ which is the      generalization of the concept of an element to be $\delta$-primary to another element of $L$.

                          \begin{d1}\label{D-C2} 
                           Given an expansion function $\delta:L\longrightarrow L$ and a function $\phi:L\longrightarrow L$, an element $b\in L$ is said to be $\phi$-$\delta$-primary to a proper element $p\in L$ if for all $x\in L$, $xb\leqslant p$ and $xb\nleqslant\phi(p)$ implies  $x\leqslant \delta(p)$.
                          \end{d1}

              For the special functions $\phi_\alpha:L\longrightarrow L$, an element \textbf{``$\phi_\alpha$-$\delta$-primary to"} another element in $L$ is defined by following settings in the definition \ref{D-C2} of an element $\phi$-$\delta$-primary to another element in $L$. For any proper element $p\in L$ in the definition \ref{D-C2}, in place of $\phi(p)$, set
                     \begin{itemize}
                     \item $\phi_0(p)=0$. Then $b\in L$ is called {\bf weakly $\delta$-primary to $p$}.
                     \item $\phi_2(p)=p^2$. Then $b\in L$ is called {\bf $2$-almost $\delta$-primary to $p$} or   {\bf $\phi_2$-$\delta$-primary to $p$} or simply  {\bf almost $\delta$-primary to $p$}.
                     \item $\phi_n(p)=p^n\ (n>2)$. Then $b\in L$ is called {\bf $n$-almost $\delta$-primary to $p$} or  {\bf $\phi_n$-$\delta$-primary to $p$} $(n>2)$.
                     \item $\phi_\omega(p)=\bigwedge_{i=1}^{\infty}p^n$. Then $b\in L$ is called  {\bf $\omega$-$\delta$-primary to $p$} or {\bf $\phi_\omega$-$\delta$-primary to $p$}.
                     \end{itemize} 
                                          
  Since for an element $a\in L$ with $a\leqslant q$ but $a\nleqslant \phi(q)$ implies that $a \nleqslant q\wedge \phi(q)$, there is no loss generality in assuming that $\phi(q) \leqslant q$. We henceforth make this assumption. 
              
     \begin{d1}
     Given any two functions $\gamma_1, \gamma_2 : L\longrightarrow L$, we define $\gamma_1\leqslant \gamma_2$ if $\gamma_1(a)\leqslant \gamma_2(a)$ for each $a\in L$.
     \end{d1} 
     
     Clearly, we have the following order:
     
     \centerline{ $\phi_0 \leqslant \phi_\omega \leqslant \cdots \leqslant \phi_{n+1} \leqslant \phi_n \leqslant \cdots \leqslant \phi_2 \leqslant \phi_1$}
      \medskip
       
        Further as $\phi(p)\leqslant p$ and $p\leqslant \delta(p)$ for each $p\in L$, the relation between the functions $\delta$ and $\phi$ is $\phi\leqslant \delta$.
        
        \medskip  
            
        According to \cite{MB1}, $\delta_0$ is an expansion function on $L$ defined as $\delta_0(p)=p$ for each $p\in L$ and $\delta_1$ is an expansion function on $L$ defined as $\delta_1(p)=\sqrt{p}$ for each $p\in L$. 
       
       The following 2 results relate an element $\phi$-prime to another element and an element $\phi$-primary to another element with some element $\phi$-$\delta$-primary to another element in $L$.
       
       \begin{thm}
       An element $b\in L$ is  $\phi$-$\delta_0$-primary to a proper element $p\in L$  if and
        only if $b$ is  $\phi$-prime to $p$.
       \end{thm}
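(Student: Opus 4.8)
The plan is to unfold both definitions and observe that they coincide once we substitute the specific expansion function $\delta_0$. Since the two sides of the biconditional become definitionally identical after this substitution, I expect the proof to be a single chain of equivalences with no substantive obstacle.

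First I would recall that, by Definition \ref{D-C2}, $b$ is $\phi$-$\delta_0$-primary to $p$ precisely when for every $x\in L$ the conditions $xb\leqslant p$ and $xb\nleqslant\phi(p)$ force $x\leqslant\delta_0(p)$. Next I would invoke the defining property of the expansion function $\delta_0$, namely $\delta_0(p)=p$ for every $p\in L$ (recorded just before the theorem, following \cite{MB1}), to rewrite the conclusion $x\leqslant\delta_0(p)$ as the equivalent $x\leqslant p$.

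Then I would compare the resulting statement --- for every $x\in L$, $xb\leqslant p$ and $xb\nleqslant\phi(p)$ implies $x\leqslant p$ --- with the definition of $b$ being $\phi$-prime to $p$ taken from \cite{A2}, and observe that the two are verbatim identical. Reading the equivalences downward yields the forward implication while reading them upward yields the converse, so both directions are dispatched simultaneously.

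The only point requiring care is the substitution step: I must use that $\delta_0$ is the particular expansion function satisfying $\delta_0(p)=p$, since it is exactly this equality that collapses the $\delta$-primary conclusion $x\leqslant\delta(p)$ to the prime conclusion $x\leqslant p$. Beyond checking that, there is no genuine difficulty --- the theorem is essentially a consistency check confirming that the $\phi$-$\delta$-primary notion specializes correctly to the $\phi$-prime notion at $\delta=\delta_0$.
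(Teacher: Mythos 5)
Your proof is correct: unfolding Definition \ref{D-C2} with $\delta_0(p)=p$ yields verbatim the definition of $\phi$-prime to $p$, which is exactly why the paper dismisses the proof as obvious. You have simply made explicit the definitional identity the paper takes for granted, so there is nothing to add.
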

       \begin{proof}
       The proof is obvious.
       \end{proof}
       
        \begin{thm}
         An element $b\in L$ is  $\phi$-$\delta_1$-primary to a proper element $p\in L$  if and
                only if $b$ is  $\phi$-primary to $p$.
        \end{thm}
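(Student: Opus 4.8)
The plan is to prove both implications simultaneously by directly unwinding the two definitions and observing that they coincide once the expansion function $\delta_1$ is substituted. Recall from \cite{MB1} that $\delta_1$ is the expansion function given by $\delta_1(p)=\sqrt{p}$ for every $p\in L$, so the whole argument is expected to be purely definitional, exactly as in the preceding theorem treating the $\delta_0$ case.

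First I would specialize Definition \ref{D-C2} to $\delta=\delta_1$ to read off what it means for $b$ to be $\phi$-$\delta_1$-primary to $p$: for all $x\in L$, the conditions $xb\leqslant p$ and $xb\nleqslant\phi(p)$ force $x\leqslant\delta_1(p)$, that is, $x\leqslant\sqrt{p}$. Next I would write out the definition from \cite{A2} of $b$ being $\phi$-primary to $p$: for all $x\in L$, the conditions $xb\leqslant p$ and $xb\nleqslant\phi(p)$ force $x\leqslant\sqrt{p}$.

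Comparing the two statements, they are word-for-word identical after the substitution $\delta_1(p)=\sqrt{p}$: the hypotheses $xb\leqslant p$ and $xb\nleqslant\phi(p)$ are the same, the quantification is over the same set (all $x\in L$, not $L_\ast$), and the conclusion $x\leqslant\sqrt{p}$ agrees. Hence each property holds if and only if the other does, giving both directions at once. There is essentially no obstacle here; the only point worth double-checking is precisely that both definitions quantify over the same index set $x\in L$, which they do, so no passage between $L$ and $L_\ast$ is required.
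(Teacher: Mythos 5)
Your proposal is correct and matches the paper's treatment: the paper dismisses this as ``The proof is obvious,'' and your definitional unwinding, substituting $\delta_1(p)=\sqrt{p}$ and noting that both definitions quantify over all $x\in L$ with identical hypotheses and conclusion, is precisely the content behind that remark. Your explicit check that the $\phi$-primary definition in this paper quantifies over $L$ rather than $L_\ast$ (unlike the plain ``primary to'' notion) is a worthwhile point of care, but it does not constitute a different route.
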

        \begin{proof}
        The proof is obvious.
        \end{proof}

       \begin{thm}\label{T-C21}
       Let $\delta,\ \gamma :L \longrightarrow L$ be expansion functions on $L$ such that $\delta \leqslant\gamma$. Let $p\in L$ be a proper element and $b\in L$. If  $b$ is  $\phi$-$\delta$-primary to  $p$ then  $b$ is  $\phi$-$\gamma$-primary to $p$. In particular, for every expansion function $\delta$ on $L$, if  $b$ is  $\phi$-prime to  $p$ then $b$ is $\phi$-$\delta$-primary to $p$.
      \end{thm}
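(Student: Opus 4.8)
The plan is to argue by a direct monotonicity chase using the defining condition of $\phi$-$\delta$-primary together with the hypothesis $\delta \leqslant \gamma$. First I would fix an arbitrary $x\in L$ satisfying the hypotheses of the defining condition for $\phi$-$\gamma$-primary, namely $xb\leqslant p$ and $xb\nleqslant\phi(p)$. Since by assumption $b$ is $\phi$-$\delta$-primary to $p$, these very hypotheses force $x\leqslant\delta(p)$. Invoking $\delta\leqslant\gamma$, which by the definition of the ordering on functions gives $\delta(p)\leqslant\gamma(p)$, transitivity of $\leqslant$ on $L$ yields $x\leqslant\gamma(p)$. As $x$ was arbitrary, this is precisely the statement that $b$ is $\phi$-$\gamma$-primary to $p$, completing the main part.

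For the ``in particular'' clause, I would reduce to the main part by observing that $\delta_0$ is the least expansion function on $L$. Indeed, for any expansion function $\delta$ and any $a\in L$, the first axiom of an expansion function gives $\delta_0(a)=a\leqslant\delta(a)$, so $\delta_0\leqslant\delta$. Thus, if $b$ is $\phi$-prime to $p$, then by the earlier theorem characterizing an element $\phi$-prime to $p$ as one $\phi$-$\delta_0$-primary to $p$, the element $b$ is $\phi$-$\delta_0$-primary to $p$. Applying the main part of the present theorem with $\delta_0$ in the role of $\delta$ and $\delta$ in the role of $\gamma$ then yields that $b$ is $\phi$-$\delta$-primary to $p$, as required.

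I do not expect a genuine obstacle here: the entire argument is a single application of monotonicity combined with transitivity of the order on $L$. The only point requiring a moment's care is confirming that $\delta_0\leqslant\delta$ holds for an arbitrary expansion function $\delta$, which is immediate from the first defining axiom $a\leqslant\delta(a)$.
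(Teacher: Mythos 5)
Your proposal is correct and follows exactly the paper's own argument: a direct monotonicity chase ($x\leqslant\delta(p)\leqslant\gamma(p)$) for the main claim, then reducing the ``in particular'' clause to the main part via $\delta_0\leqslant\delta$ and the identification of $\phi$-prime with $\phi$-$\delta_0$-primary. No gaps; nothing differs in substance from the paper's proof.
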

               \begin{proof} Assume that $b\in L$ is  $\phi$-$\delta$-primary to  a proper element $p\in L$. Suppose $xb\leqslant p$ and $xb\nleqslant \phi(p)$ for $x\in L$. Then $x\leqslant \delta(p)\leqslant\gamma(p)$ and so $b$ is  $\phi$-$\gamma$-primary to $p$. Next, for any expansion function $\delta$ on $L$, we have $\delta_0\leqslant \delta$. So if  $b$ is  $\phi$-$\delta_0$-primary to  $p$ then $b$ is $\phi$-$\delta$-primary to $p$ and we are done because if $b$ is  $\phi$-prime to $p$ then  $b$ is $\phi$-$\delta_0$-primary to $p$.
               \end{proof}
               
               \begin{c1}
               For every expansion function $\delta$ on $L$, if an element $b\in L$ is prime to a proper element $p\in L$ then then $b$ is $\phi$-$\delta$-primary to $p$.
               \end{c1}
               \begin{proof}
               The proof follows by using Theorem \ref{T-C21} to the fact that if an element $b\in L$ is prime to a proper element $p\in L$ then then $b$ is $\phi$-prime to $p$.
               \end{proof}

 \medskip   
     
 The following example shows that (by taking $\phi$ as $\phi_2$ and $\delta$ as $\delta_1$ for convenience) 
          \begin{itemize}
          \item If $b\in L$ is $\phi$-$\delta$-primary to a proper element $p\in L$ then $b$ need not be $\phi$-prime to $p$.
                     
          \item If $b\in L$ is $\phi$-$\delta$-primary to a proper element $p\in L$ then $b$ need not be prime to $p$.
          \end{itemize}

         \begin{e1}\label{E-C21}
         Consider the lattice $L$ of ideals of the ring $R=<Z_{24}\ , \ +\ ,\ \cdot>$. Then the only ideals of $R$ are the principal ideals (0),(2),(3),(4),(6),(8),(12),(1). Clearly, $L=\{$(0),(2),(3),(4),(6),(8),(12),(1)$\}$ is a compactly generated multiplicative lattice.
          It is easy to see that the element $(2)\in L$ is $\phi_2$-$\delta_1$-primary to $(4)\in L$ while $(2)$ is not $\phi_2$-prime to $(4)$. Also $(2)$ is not prime to $(4)$.\\        
        \end{e1}
        
        
                 
   Now before obtaining the characterizations of an element $\phi$-$\delta$-primary to another element of $L$, we state the following essential lemma which is outcome of Lemma 2.3.13 from \cite{S0}.
                    
                    \begin{l1}\label{L}
                    Let $a_1,\ a_2\in L$. Suppose $b\in L$ satisfies the following property:
                    
                    ($\ast$). If $h\in L_\ast$ with $h\leqslant b$ then either $h\leqslant a_1$ or $h\leqslant a_2$.
                    
                    Then either $b\leqslant a_1$ or $b\leqslant a_2$.
                    \end{l1}                             
             
         \begin{thm}\label{T-C26}
         Let $p$ be a proper element of $L$ and $b\in L$. Then the following statements are equivalent:
         
         \textcircled{1}. $b$ is $\phi$-$\delta$-primary to $p$. 
         
         \textcircled{2}. either $(p:b)\leqslant \delta(p)$ or $(p:b)=(\phi(p):b)$. 
         
         \textcircled{3}. for every $r\in L_\ast$, $rb\leqslant p$ and $rb\nleqslant \phi(p)$ implies $r\leqslant \delta(p)$. 
         \end{thm}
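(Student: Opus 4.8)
The plan is to establish the equivalence by proving the cycle of implications $\textcircled{1}\Rightarrow\textcircled{3}\Rightarrow\textcircled{2}\Rightarrow\textcircled{1}$. Throughout I would use two standing facts that follow from the join-distributivity of the multiplication: first, $(p:b)\,b\leqslant p$ and $(\phi(p):b)\,b\leqslant \phi(p)$, so that $xb\leqslant p$ is equivalent to $x\leqslant(p:b)$ and $xb\leqslant\phi(p)$ is equivalent to $x\leqslant(\phi(p):b)$; second, since $\phi(p)\leqslant p$ we always have $(\phi(p):b)\leqslant(p:b)$.

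The implication $\textcircled{1}\Rightarrow\textcircled{3}$ is immediate, as statement $\textcircled{3}$ is just the restriction of the defining condition of $\textcircled{1}$ to compact elements $r\in L_\ast\subseteq L$. For $\textcircled{2}\Rightarrow\textcircled{1}$ I would argue by cases. Let $x\in L$ with $xb\leqslant p$ and $xb\nleqslant\phi(p)$, so $x\leqslant(p:b)$. If $(p:b)\leqslant\delta(p)$, then $x\leqslant\delta(p)$ and we are done. If instead $(p:b)=(\phi(p):b)$, then $x\leqslant(\phi(p):b)$, whence $xb\leqslant(\phi(p):b)\,b\leqslant\phi(p)$, contradicting $xb\nleqslant\phi(p)$; so this alternative admits no such $x$ and $\textcircled{1}$ holds vacuously.

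The crux is $\textcircled{3}\Rightarrow\textcircled{2}$, where Lemma \ref{L} carries the real weight. Assuming $\textcircled{3}$, I would test each compact $h\in L_\ast$ with $h\leqslant(p:b)$. Any such $h$ satisfies $hb\leqslant(p:b)\,b\leqslant p$, so exactly one of two cases occurs: either $hb\leqslant\phi(p)$, giving $h\leqslant(\phi(p):b)$, or $hb\nleqslant\phi(p)$, in which case $\textcircled{3}$ yields $h\leqslant\delta(p)$. Thus every compact $h\leqslant(p:b)$ lies below $\delta(p)$ or below $(\phi(p):b)$; that is, property $(\ast)$ of Lemma \ref{L} holds for the element $(p:b)$ with $a_1=\delta(p)$ and $a_2=(\phi(p):b)$. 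Applying the lemma, I conclude that either $(p:b)\leqslant\delta(p)$ or $(p:b)\leqslant(\phi(p):b)$, and in the second alternative the reverse inequality $(\phi(p):b)\leqslant(p:b)$ upgrades this to the equality $(p:b)=(\phi(p):b)$, which is precisely $\textcircled{2}$.

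I expect the only genuine subtlety to be the bookkeeping of this dichotomy: one must verify hypothesis $(\ast)$ uniformly for every compact piece of $(p:b)$ and resist the temptation to conclude the dichotomy element-by-element for $(p:b)$ itself, since the two cases may split differently across distinct compact $h$. Compact generation of $L$ is exactly what legitimizes the passage from compact elements below $(p:b)$ to $(p:b)$, and it is this passage that Lemma \ref{L} packages cleanly, so I would lean on it rather than attempt the join argument directly.
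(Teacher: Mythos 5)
Your proof is correct, and it runs the cycle in the opposite direction from the paper's: you prove \textcircled{1}$\Rightarrow$\textcircled{3}$\Rightarrow$\textcircled{2}$\Rightarrow$\textcircled{1}, while the paper proves \textcircled{1}$\Rightarrow$\textcircled{2}$\Rightarrow$\textcircled{3}$\Rightarrow$\textcircled{1}. The Lemma~\ref{L} step is the same in substance in both arguments --- test each compact $h\leqslant(p:b)$ against the dichotomy $\delta(p)$ versus $(\phi(p):b)$, then upgrade $(p:b)\leqslant(\phi(p):b)$ to equality using $\phi(p)\leqslant p$ --- except that you run it from the weaker hypothesis \textcircled{3} rather than \textcircled{1}, which is all that argument ever uses, since it only feeds compact elements into the hypothesis. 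What your orientation buys is the elimination of the paper's one genuinely delicate step: its proof of \textcircled{3}$\Rightarrow$\textcircled{1} must pass from compact elements to an arbitrary $x\in L$ by hand, choosing a compact $y'\leqslant x$ with $y'b\nleqslant\phi(p)$ and joining it to each compact $y\leqslant x$ so that $y\vee y'$ is a compact witness to which \textcircled{3} applies. In your cycle the implication landing on \textcircled{1} instead comes out of \textcircled{2}, which is a short case analysis (the alternative $(p:b)=(\phi(p):b)$ being vacuous by residuation), so every appeal to compact generation is absorbed into Lemma~\ref{L}. Both proofs lean equally on the residuation facts $(c:b)\,b\leqslant c$ and $xb\leqslant c \Leftrightarrow x\leqslant(c:b)$ (the paper uses them implicitly in its \textcircled{1}$\Rightarrow$\textcircled{2} and \textcircled{2}$\Rightarrow$\textcircled{3} steps), and on the standing convention $\phi(p)\leqslant p$, so your route is a legitimate and slightly more economical rearrangement rather than a reliance on extra hypotheses.
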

         \begin{proof}
         \textcircled{1}$\Longrightarrow$\textcircled{2}. Suppose \textcircled{1} holds. Let $h\in L_\ast$ be such that $h\leqslant (p:b)$. Then $hb\leqslant p$. If $hb\leqslant \phi(p)$ then $h\leqslant (\phi(p):b)$. If $hb\nleqslant \phi(p)$ then since $b$ is $\phi$-$\delta$-primary to $p$, $hb\leqslant p$ and $hb\nleqslant \phi(p)$, it follows that $h\leqslant \delta(p)$. Hence by Lemma $\ref{L}$, either $(p:b)\leqslant (\phi(p):b)$ or $(p:b)\leqslant \delta(p)$. Consequently, either $(p:b)=(\phi(p):b)$ or $(p:b)\leqslant \delta(p)$.
         
         \textcircled{2}$\Longrightarrow$\textcircled{3}. Suppose \textcircled{2} holds. Let $rb\leqslant p$ and $rb\nleqslant \phi(p)$ for  $r\in L_\ast$. By \textcircled{2} if $(p:b)=(\phi(p):b)$ then as $r\leqslant (p:b)$, it follows that $r\leqslant (\phi(p):b)$ which contradicts $rb\nleqslant \phi(p)$ and so we must have $(p:b)\leqslant \delta(p)$. Therefore $r\leqslant (p:b)$ gives $r\leqslant \delta(p)$.
         
         \textcircled{3}$\Longrightarrow$\textcircled{1}. Suppose \textcircled{3} holds. Let $xb\leqslant p$ and $xb\nleqslant \phi(p)$ for $x\in L$. Then as $L$ is compactly generated, there exist $y'\in L_\ast$ such that $y'\leqslant x$ and $y'b\nleqslant \phi(p)$. Let $y\leqslant x$ be any compact element of $L$. Then 
         $(y\vee y')\in L_\ast$ such that $(y\vee y')b\leqslant p$ and $(y\vee y')b\nleqslant \phi(p)$. So by \textcircled{3}, it follows that $(y\vee y')\leqslant \delta(p)$ which implies $x\leqslant \delta(p)$ and therefore $b$ is $\phi$-$\delta$-primary to $p$.
         \end{proof} 
         
\begin{thm}
 Let $(L,\ m)$ be a quasi-local Noether lattice. If a proper element $p\in L$ is such that $p^2=m^2\leqslant p\leqslant m$ and $b\in L$ then $b$ is either $\phi_2$-$\delta_1$-primary to $p$ or $b\leqslant p$.
 \end{thm}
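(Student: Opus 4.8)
The plan is to reduce the whole statement to the single observation that $m\leqslant\sqrt{p}$, and then to split into the two cases $b\nleqslant m$ and $b\leqslant m$. First I would establish $m\leqslant\sqrt{p}$: for any compact $x\leqslant m$, the hypothesis $m^2\leqslant p$ yields $x^2\leqslant m^2\leqslant p$, and since every finite product of compact elements is compact, $x^2\in L_\ast$ witnesses $x\leqslant\sqrt{p}$; as $L$ is compactly generated, $m=\vee\{x\in L_\ast\mid x\leqslant m\}\leqslant\sqrt{p}$. The payoff is twofold: to verify the defining implication $x\leqslant\delta_1(p)=\sqrt{p}$ it suffices to verify $x\leqslant m$, and dually $x\nleqslant\sqrt{p}$ forces $x\nleqslant m$, which by maximality of $m$ means $x\vee m=1$.

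Next, in the case $b\nleqslant m$ I would show that $b$ is in fact $\delta_1$-primary to $p$, hence $\phi_2$-$\delta_1$-primary (the latter condition merely adds the restriction $xb\nleqslant p^2$). Take any $x\in L$ with $xb\leqslant p$ and suppose, for contradiction, $x\nleqslant m$. Then $x\vee m=b\vee m=1$, so by join-distributivity of the product $1=(x\vee m)(b\vee m)=xb\vee xm\vee mb\vee m^2$. Each term lies below $m$, using $xb\leqslant p\leqslant m$ and $m^2\leqslant m$, giving $1\leqslant m$, a contradiction. Hence $x\leqslant m\leqslant\sqrt{p}$ for every such $x$, so $b$ is $\delta_1$-primary, and therefore $\phi_2$-$\delta_1$-primary, to $p$.

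Finally, in the case $b\leqslant m$ I would argue by contraposition: assuming $b$ is not $\phi_2$-$\delta_1$-primary to $p$, I would deduce $b\leqslant p$. The failure of the condition produces some $x\in L$ with $xb\leqslant p$, $xb\nleqslant p^2$ and $x\nleqslant\sqrt{p}$; by the first paragraph $x\nleqslant m$, so $x\vee m=1$. Multiplying by $b$ gives $b=(x\vee m)b=xb\vee mb$, and since $b\leqslant m$ we have $mb\leqslant m^2=p^2\leqslant p$ while $xb\leqslant p$, whence $b\leqslant p$. Combining the two cases establishes the required dichotomy. (One could equally reduce all verifications to compact elements using condition \textcircled{3} of Theorem \ref{T-C26}, but the argument above does not need this.)

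The delicate points, which I expect to be the only places requiring care, are: obtaining $m\leqslant\sqrt{p}$ cleanly from compact generation together with the standing hypothesis that products of compacts are compact; and correctly invoking maximality of $m$ to pass from $x\nleqslant m$ to $x\vee m=1$, so that the join-distributive expansion of the relevant product can be bounded by $m$. It is essential that the witness $x$ coming from the negated definition genuinely satisfies $x\nleqslant m$ (this is exactly where $m\leqslant\sqrt{p}$ is used), and that the inequality $mb\leqslant m^2$ relies on $b\leqslant m$; consequently the split on whether $b\leqslant m$ is structural rather than cosmetic.
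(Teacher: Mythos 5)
Your proof is correct, but it takes a genuinely different route from the paper's. The paper's proof is a two-line case split on the witness $x$ itself: given $xb\leqslant p$ and $xb\nleqslant p^2$, either $x\nleqslant m$, in which case quasi-locality forces $x=1$ (every proper element of a quasi-local Noether lattice lies below the unique maximal element $m$), so $b=xb\leqslant p$; or $x\leqslant m$, in which case $x^2\leqslant m^2=p^2\leqslant p$ gives $x\leqslant\sqrt{p}=\delta_1(p)$ directly. You instead split structurally on whether $b\leqslant m$, first isolate the lemma $m\leqslant\sqrt{p}$, and in both cases replace the paper's step ``$x\nleqslant m$ implies $x=1$'' by the weaker consequence of maximality ``$x\nleqslant m$ implies $x\vee m=1$,'' followed by a join-distributive expansion of a product ($(x\vee m)(b\vee m)$ in one case, $(x\vee m)b$ in the other). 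What this buys you: your argument never invokes the fact that proper elements sit below $m$ --- a fact which in general needs ACC or compactness of $1$ --- so it goes through in any quasi-local compactly generated multiplicative lattice, and in the case $b\nleqslant m$ it even yields the stronger conclusion that $b$ is $\delta_1$-primary (not merely $\phi_2$-$\delta_1$-primary) to $p$. What the paper's route buys: brevity, since the dichotomy in the statement falls out of a single case analysis rather than two nested ones. One cosmetic remark: your aside that $x^2\in L_\ast$ is needed to witness $x\leqslant\sqrt{p}$ is superfluous --- the definition of $\sqrt{p}$ only requires $x$ itself to be compact with $x^n\leqslant p$.
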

 \begin{proof}
 Let $xb\leqslant p$ and $xb\nleqslant \phi_2(p)$ for $x\in L$. If $x\nleqslant m$ then $x=1$. So $xb\leqslant p$ gives $b\leqslant p$.  Now if $x\leqslant m$ then $x^2\leqslant m^2=p^2\leqslant p$ and hence $x\leqslant\delta_1(p)$ which implies $b$ is $\phi_2$-$\delta_1$-primary to $p$.  Thus $b$ is either $\phi_2$-$\delta_1$-primary to $p$ or $b\leqslant p$.
 \end{proof}                  
             
      To obtain the relation among an element $\phi_\alpha$-$\delta$-primary to another element in $L$, we prove the following lemma.
             \begin{l1}\label{L-C1}
              Let $\gamma_1,\ \gamma_2 :L \longrightarrow L$ be functions on $L$ such that $\gamma_1 \leqslant \gamma_2$ and p be proper element of $L$. If an element $b\in L$ is $\gamma_1$-$\delta$-primary to p then $b\in L$ is $\gamma_2$-$\delta$-primary to p. 
             \end{l1}
                      \begin{proof} Let an element $b\in L$ be $\gamma_1$-$\delta$-primary to p. Suppose $xb\leqslant p$ and $xb\nleqslant \gamma_2(p)$ for $x\in L$. Then as $\gamma_1\leqslant\gamma_2$, we have $xb\leqslant p$ and $xb\nleqslant \gamma_1(p)$. Since $b$ is $\gamma_1$-$\delta$-primary to p, it follows that $x\leqslant\delta(p)$ and hence $b$ is $\gamma_2$-$\delta$-primary to p.
                      \end{proof}    
                         
     \begin{thm}\label{T-C22}
             For an element $b\in L$ and a proper element $p\in L$, consider the following statements:
             \begin{enumerate}
             \item[(a)] $b$ is $\delta$-primary to $p$. 
             \item[(b)] $b$ is $\phi_0$-$\delta$-primary to $p$.
             \item[(c)] $b$ is $\phi_\omega$-$\delta$-primary to $p$.
             \item[(d)] $b$ is $\phi_{(n+1)}$-$\delta$-primary to $p$.
             \item[(e)] $b$ is $\phi_n$-$\delta$-primary to $p$ where $n\geqslant 2$.
             \item[(f)] $b$ is $\phi_2$-$\delta$-primary to $p$.
             \end{enumerate}
             Then $(a) \Longrightarrow (b) \Longrightarrow (c) \Longrightarrow (d) \Longrightarrow (e) \Longrightarrow (f)$.
           \end{thm}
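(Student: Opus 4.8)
The plan is to prove the single implication $(a)\Longrightarrow(b)$ directly from the definitions and then to obtain the entire remaining chain $(b)\Longrightarrow(c)\Longrightarrow(d)\Longrightarrow(e)\Longrightarrow(f)$ by repeatedly applying Lemma \ref{L-C1}, so that essentially all the work reduces to reading off the order among the special functions $\phi_\alpha$ that has already been displayed before the theorem.

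For $(a)\Longrightarrow(b)$ I would assume that $b$ is $\delta$-primary to $p$ and take any $x\in L$ with $xb\leqslant p$ and $xb\nleqslant\phi_0(p)=0$. The observation is that being $\delta$-primary already delivers the conclusion $x\leqslant\delta(p)$ from $xb\leqslant p$ alone, so the extra hypothesis $xb\nleqslant 0$ is simply not used; hence $b$ is $\phi_0$-$\delta$-primary (that is, weakly $\delta$-primary) to $p$. This is the only step not routed through the lemma, and it is immediate.

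For the four implications from $(b)$ onward I would invoke the displayed order
\[
\phi_0\leqslant\phi_\omega\leqslant\cdots\leqslant\phi_{n+1}\leqslant\phi_n\leqslant\cdots\leqslant\phi_2
\]
and feed each consecutive inequality into Lemma \ref{L-C1}, which guarantees that if $\gamma_1\leqslant\gamma_2$ and $b$ is $\gamma_1$-$\delta$-primary to $p$ then $b$ is $\gamma_2$-$\delta$-primary to $p$. Thus $\phi_0\leqslant\phi_\omega$ yields $(b)\Longrightarrow(c)$, $\phi_\omega\leqslant\phi_{n+1}$ yields $(c)\Longrightarrow(d)$, $\phi_{n+1}\leqslant\phi_n$ yields $(d)\Longrightarrow(e)$, and $\phi_n\leqslant\phi_2$ yields $(e)\Longrightarrow(f)$ (the last being an identity when $n=2$).

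I do not expect any genuine obstacle: the content of the theorem is already encapsulated in Lemma \ref{L-C1}, and the only remaining thing to confirm is the order relations among the $\phi_\alpha$, namely $0\leqslant\bigwedge_{n}p^n\leqslant p^{n+1}\leqslant p^n\leqslant p^2$. These hold because multiplication on $L$ is monotone and $p\leqslant 1$, so that $p^{n+1}=p\,p^n\leqslant p^n$, while $\bigwedge_{n}p^n$ lies below each power by the definition of the meet. Since this chain is already displayed in the text immediately preceding the theorem, I would merely cite it rather than reprove it, keeping the whole argument to a handful of lines.
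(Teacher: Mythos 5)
Your proposal is correct and matches the paper's own proof essentially step for step: the paper also handles $(a)\Longrightarrow(b)$ as an immediate consequence of the definitions and then derives the entire remaining chain by applying Lemma \ref{L-C1} to the displayed order $\phi_0 \leqslant \phi_\omega \leqslant \cdots \leqslant \phi_{n+1} \leqslant \phi_n \leqslant \cdots \leqslant \phi_2$. Your brief verification of those order relations (via monotonicity of multiplication and $p\leqslant 1$) is a small addition the paper leaves implicit, but the argument is the same.
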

          \begin{proof}
          Obviously, if $b$ is $\delta$-primary to $p$ then $b$ is weakly $\delta$-primary to $p$ and hence $(a) \Longrightarrow (b)$. The remaining implications follow by using Lemma \ref{L-C1} to the fact that $\phi_0 \leqslant \phi_\omega \leqslant \cdots \leqslant \phi_{n+1} \leqslant \phi_n \leqslant \cdots \leqslant \phi_2$
          \end{proof}        
              
     \begin{c1}\label{L-C21}
       Let $p\in L$ be a proper element and $b\in L$. Then $b$ is $\phi_\omega$-$\delta$-primary to $p$ if and only if $b$ is $\phi_n$-$\delta$-primary to $p$ for every $n\geqslant 2$.
       \end{c1}
       \begin{proof}
       Assume that $b$ is $\phi_n$-$\delta$-primary to $p$  for every $n\geqslant 2$. Let $xb\leqslant p$ and $xb\nleqslant\bigwedge_{n=1}^{\infty}p^n$ for $x\in L$. Then $xb\leqslant p$ and $xb\nleqslant p^n$ for some $n\geqslant 2$. Since $b$ is $\phi_n$-$\delta$-primary to $p$, we have $x\leqslant\delta(p)$ and hence $b$ is $\phi_\omega$-$\delta$-primary to $p$. The converse follows from Theorem $\ref{T-C22}$.
       \end{proof}
       
   Now we show that under a certain condition, if an element $b\in L$ is $\phi_n$-$\delta$-primary $(n\geqslant 2)$ to a proper element $p\in L$ then $b$ is $\delta$-primary to $p$.
     
     \begin{thm}\label{T-C74}
     Let $L$ be a local Noetherian domain. Let $p\in L$ be a proper element and $0\neq b\in L$.  Then $b$ is $\phi_n$-$\delta$-primary to $p$ for every $n\geqslant 2$ if and only if $b$ is $\delta$-primary to $p$.
     \end{thm}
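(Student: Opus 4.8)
The plan is to prove the two implications separately, with the backward implication being essentially immediate and the forward implication carrying all the content. For the direction ``$b$ is $\delta$-primary to $p$ $\Longrightarrow$ $b$ is $\phi_n$-$\delta$-primary to $p$ for every $n\geqslant 2$'', I would simply appeal to Theorem \ref{T-C22}: the implication $(a)\Longrightarrow(e)$ recorded there holds for each $n\geqslant 2$, so an element that is $\delta$-primary to $p$ is automatically $\phi_n$-$\delta$-primary to $p$ for every such $n$. (Unwound directly: if $xb\leqslant p$ and $xb\nleqslant p^{n}$, then in particular $xb\leqslant p$, whence $x\leqslant\delta(p)$.) This half uses neither the domain nor the Noetherian hypothesis.

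For the converse I would first invoke Corollary \ref{L-C21} to replace the hypothesis ``$b$ is $\phi_n$-$\delta$-primary to $p$ for every $n\geqslant 2$'' by the single statement ``$b$ is $\phi_\omega$-$\delta$-primary to $p$'', where $\phi_\omega(p)=\bigwedge_{n=1}^{\infty}p^{n}$. The crucial step is then to evaluate $\phi_\omega(p)$. Since $L$ is a local Noetherian domain with (unique) maximal element $m$, and $p$ is proper so that $p\leqslant m$, the multiplicative-lattice analogue of Krull's intersection theorem applies and yields $\bigwedge_{n=1}^{\infty}p^{n}=0$; consequently $\phi_\omega(p)=0=\phi_0(p)$, and $b$ is in fact weakly $\delta$-primary to $p$.

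It then remains to upgrade ``weakly $\delta$-primary'' to ``$\delta$-primary'', and here the domain hypothesis together with $b\neq 0$ does the work. Let $x\in L$ with $xb\leqslant p$. If $xb\neq 0$, then $0\neq xb\leqslant p$ and weak $\delta$-primariness gives $x\leqslant\delta(p)$. If instead $xb=0$, then since $b\neq 0$ and $L$ has no nonzero zero divisors we obtain $x=0\leqslant\delta(p)$. In either case $x\leqslant\delta(p)$, so $b$ is $\delta$-primary to $p$, which completes the argument.

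The hard part will be the justification of $\bigwedge_{n=1}^{\infty}p^{n}=0$: this is the single place where the full strength of ``local Noetherian domain'' is consumed, and it rests on the lattice version of Krull's intersection theorem, i.e.\ on an Artin--Rees/Nakayama type argument showing $p\bigl(\bigwedge_{n}p^{n}\bigr)=\bigwedge_{n}p^{n}$ and then killing this element using that $p\leqslant m$ in a domain. Once that identity is in hand, everything downstream is purely formal.
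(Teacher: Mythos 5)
Your proof is correct and follows essentially the same route as the paper's: both handle the easy direction by Theorem \ref{T-C22}, and both settle the hard direction by combining the lattice form of Krull's intersection theorem ($\bigwedge_{n=1}^{\infty}p^{n}=0$ for a proper element of a local Noetherian lattice domain) with the domain hypothesis and $b\neq 0$ to dispose of the case $xb=0$. The only difference is organizational: you reroute through $\phi_\omega$ via Corollary \ref{L-C21} and then upgrade weak $\delta$-primariness, whereas the paper splits directly into the cases $xb\nleqslant p^{n}$ for some $n\geqslant 2$ versus $xb\leqslant p^{n}$ for all $n$, and cites a reference for the intersection theorem rather than sketching its Artin--Rees/Nakayama proof as you do.
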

     \begin{proof}
     Assume that $b$ is $\phi_n$-$\delta$-primary to $p$ for every $n\geqslant 2$. Let $xb\leqslant p$ for $x\in L$. If $xb\nleqslant \phi_n(p)$ for $n\geqslant 2$ then as $b$ is $\phi_n$-$\delta$-primary to $p$, we have $x\leqslant \delta(p)$. If $xb\leqslant \phi_n(p)=p^n$ for all $n\geqslant 1$ then as $L$ is local Noetherian, by Corollary 3.3 of $\cite{D3}$, it follows that $xb\leqslant\bigwedge_{n=1}^{\infty}p^n=0$ and so $xb=0$. Since $L$ is domain and $0\neq b$, we have $x=0$  which implies $x\leqslant \delta(p)$. Hence, in any case, $b$ is $\delta$-primary to $p$. Converse follows from Theorem \ref{T-C22}.
     \end{proof}
     
     \begin{c1}
     Let $L$ be a local Noetherian domain. Let $p\in L$ be a proper element and $0\neq b\in L$. Then $b$ is $\phi_\omega$-$\delta$-primary to $p$ if and only if $b$ is $\delta$-primary to $p$ .
     \end{c1}
     \begin{proof}
     The proof follows from Theorem $\ref{T-C74}$ and Corollary $\ref{L-C21}$.
     \end{proof}
          
 Clearly, if an element $b\in L$ is $\delta$-primary to a proper element $p\in L$ then $b$ is $\phi$-$\delta$-primary to $p$. The following example shows that its converse is not true (by taking $\phi$ as $\phi_2$ and $\delta$ as $\delta_1$ for convenience).
            
            \begin{e1}\label{E-C22}
            Consider the lattice $L$ of ideals of the ring $R=<Z_{30}\ , \ +\ ,\ \cdot>$. Then the only ideals of $R$ are the principal ideals (0),(2),(3),(5),(6),(10),(15),(1). Clearly $L=\{$(0),(2),(3),(5),(6),(10),(15),(1)$\}$ is a compactly generated multiplicative lattice.
             It is easy to see that the element $(2)\in L$ is  $\phi_2$-$\delta_1$-primary to $(6)\in L$ but $(2)$ is not $\delta_1$-primary to $(6)$.\\
            \end{e1} 
   
   In the following successive six theorems, we show conditions under which if an element $b\in L$ is $\phi$-$\delta$-primary to a proper element $p$ then $b$ is $\delta$-primary to $p$. 
   
   \begin{thm}\label{T-C72}
   Let $L$ be a Noether lattice. Let $0\neq p\in L$ be a non-nilpotent proper element satisfying the restricted cancellation law. Let $b\in L$ be such that $p<b$. Then $b$ is $\phi$-$\delta$-primary to $p$ for some $\phi\leqslant\phi_2$ if and only if $b$ is $\delta$-primary to $p$.
   \end{thm}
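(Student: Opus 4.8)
The plan is to handle the two implications separately, concentrating essentially all the work on the forward direction. The backward implication is immediate: as observed just before Example \ref{E-C22}, any element that is $\delta$-primary to $p$ is automatically $\phi$-$\delta$-primary to $p$ for every function $\phi$; taking $\phi=\phi_2$ (which trivially satisfies $\phi_2\leqslant\phi_2$) provides a $\phi\leqslant\phi_2$ witnessing the statement.

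For the forward direction I would assume $b$ is $\phi$-$\delta$-primary to $p$ for some $\phi\leqslant\phi_2$ and verify the definition of $\delta$-primary directly. So I take an arbitrary $x\in L$ with $xb\leqslant p$ and aim to show $x\leqslant\delta(p)$. If $xb\nleqslant\phi(p)$, the $\phi$-$\delta$-primary hypothesis gives $x\leqslant\delta(p)$ at once, so the only case needing attention is $xb\leqslant\phi(p)$; and since $\phi(p)\leqslant\phi_2(p)=p^2$, this case amounts to $xb\leqslant p^2$.

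The key technical step, and the one I expect to be the real obstacle, is to show that $pb\nleqslant\phi(p)$, which is exactly where all three special hypotheses on $p$ are used. From $p<b$ I have $p\leqslant b$, hence $p^2\leqslant pb$; if in addition $pb\leqslant p^2$ held, then $pb=p^2=p\cdot p$, and this product is nonzero since $p$ is non-nilpotent, so the restricted cancellation law for $p$ would force $b=p$, contradicting $p<b$. Thus $pb\nleqslant p^2$, and because $\phi(p)\leqslant p^2$ this yields $pb\nleqslant\phi(p)$.

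Finally I would use the element $x\vee p$ to escape the troublesome case. Using join distributivity of the multiplication together with $pb\leqslant p$, the standing assumption $\phi(p)\leqslant p$, and $xb\leqslant\phi(p)$, one checks $(x\vee p)b=xb\vee pb\leqslant p$; moreover $pb\leqslant(x\vee p)b$ combined with $pb\nleqslant\phi(p)$ forces $(x\vee p)b\nleqslant\phi(p)$. Applying the $\phi$-$\delta$-primary hypothesis to $x\vee p$ then gives $x\vee p\leqslant\delta(p)$, and in particular $x\leqslant\delta(p)$, which completes the argument and hence shows $b$ is $\delta$-primary to $p$.
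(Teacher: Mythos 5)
Your proposal is correct, but in the hard case it takes a genuinely different route from the paper. The paper first upgrades the hypothesis via Lemma \ref{L-C1} to ``$b$ is $\phi_2$-$\delta$-primary to $p$'', and then, when $xb\leqslant \phi_2(p)=p^2$, it notes $xp\leqslant xb\leqslant p^2\neq 0$ and invokes Lemma 1.11 of \cite{W} (an inequality-form cancellation statement for elements satisfying the restricted cancellation law) to conclude $x\leqslant p\leqslant\delta(p)$ outright. You instead use the restricted cancellation law only in the equality form given in the introduction: since $p\leqslant b$ gives $p^2\leqslant pb$, the inequality $pb\leqslant p^2$ would force $pb=p\cdot p\neq 0$ (non-nilpotency), hence $b=p$ by cancellation, contradicting $p<b$; so $pb\nleqslant p^2$, a fortiori $pb\nleqslant\phi(p)$, and the join trick with $x\vee p$ finishes the argument. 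Your version is more self-contained, needing nothing from \cite{W} beyond the definition of the restricted cancellation law, and it also dispenses with the reduction to $\phi_2$, working directly with the given $\phi$; in effect, your last paragraph re-proves the paper's later Theorem \ref{T-C23} (if $b$ is $\phi$-$\delta$-primary to $p$ and $pb\nleqslant\phi(p)$ then $b$ is $\delta$-primary to $p$), so once $pb\nleqslant\phi(p)$ is established you could simply cite that result. What the paper's route buys is brevity (given Wells' lemma) and the stronger intermediate conclusion $x\leqslant p$ rather than just $x\leqslant\delta(p)$; what yours buys is independence from the external lemma and a cleaner isolation of where each hypothesis on $p$ is used.
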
 
   \begin{proof}
   Assume that $b\in L$ is $\delta$-primary to $p\in L$. Then obviously, $b$ is $\phi$-$\delta$-primary to $p$ for every $\phi$ and hence for some $\phi\leqslant \phi_2$. Conversely, assume that $b$ is $\phi$-$\delta$-primary to $p$ for some $\phi\leqslant\phi_2$. Then by Lemma \ref{L-C1}, $b$ is $\phi_2$-$\delta$-primary (almost $\delta$-primary) to $p$. Let $xb\leqslant p$ for $x\in L$. If $xb\nleqslant \phi_2(p)$ then as $b$ is $\phi_2$-$\delta$-primary to $p$, we have $x\leqslant \delta(p)$. If $xb\leqslant \phi_2(p)=p^2$ then $xp\leqslant p^2\neq 0$ as $p<b$.  Hence $x\leqslant p\leqslant \delta(p)$ by Lemma 1.11 of $\cite{W}$ and thus $b$ is $\delta$-primary to $p$.
   \end{proof}
   
  \begin{c1}
   Let $L$ be a Noether lattice. Let $0\neq p\in L$ be a non-nilpotent proper element satisfying the restricted cancellation law. Let $b\in L$ be such that $p<b$.  If $b$ is $\phi_2$-$\delta$-primary to $p$  then $b$ is $\delta$-primary to $p$.
  \end{c1}
   \begin{proof}
   The proof follows from proof of the Theorem \ref{T-C72}.
   \end{proof}
   
   The following result is general form of Theorem \ref{T-C72}.
   
   \begin{thm}\label{T-C79}
      Let $L$ be a Noether lattice. Let $0\neq p\in L$ be a non-nilpotent proper element satisfying the restricted cancellation law. Let $b\in L$ be such that $p<b$. Then $b$ is $\phi$-$\delta$-primary to $p$ for some $\phi\leqslant\phi_n$ and for all $n\geqslant 2$ if and only if $b$ is $\delta$-primary to $p$. 
    \end{thm}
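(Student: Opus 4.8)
The plan is to establish, for each fixed $n\geqslant 2$, the equivalence between the statement ``$b$ is $\phi$-$\delta$-primary to $p$ for some $\phi\leqslant\phi_n$'' and ``$b$ is $\delta$-primary to $p$''; since this then holds for every $n\geqslant 2$, the quantified statement of the theorem follows directly. The proof closely parallels and generalizes that of Theorem \ref{T-C72}, which is the case $n=2$. The forward direction carries no content: if $b$ is $\delta$-primary to $p$, then $b$ is $\phi$-$\delta$-primary to $p$ for every function $\phi$, hence in particular for some (indeed any) $\phi\leqslant\phi_n$ for each $n\geqslant 2$.

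For the converse, fix $n\geqslant 2$ and assume $b$ is $\phi$-$\delta$-primary to $p$ for some $\phi\leqslant\phi_n$. The first step is to apply Lemma \ref{L-C1} to replace $\phi$ by $\phi_n$, so that $b$ is $\phi_n$-$\delta$-primary to $p$, where $\phi_n(p)=p^n$. I then take an arbitrary $x\in L$ with $xb\leqslant p$ and split according to whether $xb\leqslant p^n$. If $xb\nleqslant p^n=\phi_n(p)$, the defining property of being $\phi_n$-$\delta$-primary gives $x\leqslant\delta(p)$ immediately.

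The substantive case is $xb\leqslant p^n$. Here I use $p<b$, hence $p\leqslant b$, to obtain $px\leqslant xb\leqslant p^n$, and then factor $p$ out of a suitable join. Since $px\leqslant p^n$, join-distributivity of the multiplication gives $p(x\vee p^{n-1})=px\vee p^n=p^n=p\cdot p^{n-1}$, and $p^n\neq 0$ because $p$ is non-nilpotent. Applying the restricted cancellation law for $p$ yields $x\vee p^{n-1}=p^{n-1}$, that is $x\leqslant p^{n-1}\leqslant p\leqslant\delta(p)$, where $p^{n-1}\leqslant p$ holds for all $n\geqslant 2$. Combining the two cases shows that $xb\leqslant p$ always forces $x\leqslant\delta(p)$, so $b$ is $\delta$-primary to $p$.

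I expect the main obstacle to be the choice of the element $x\vee p^{n-1}$ in the second case: the task is to manufacture, from the inequality $px\leqslant p^n$, an exact equality of the form $p\cdot u=p\cdot p^{n-1}$ to which restricted cancellation applies, and then to observe that the resulting bound $x\leqslant p^{n-1}$ is harmless because $p^{n-1}\leqslant p\leqslant\delta(p)$. This is precisely the step that specializes, when $n=2$, to the inequality $xp\leqslant p^2$ and the conclusion $x\leqslant p$ used in Theorem \ref{T-C72}; the non-nilpotency of $p$ is exactly what guarantees $p^n\neq 0$, so that the cancellation is legitimate, and the hypothesis $p<b$ is what lets one pass from $xb\leqslant p^n$ to $px\leqslant p^n$.
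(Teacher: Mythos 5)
Your proof is correct, and its overall skeleton matches the paper's: the forward direction is trivial, Lemma \ref{L-C1} upgrades $\phi$ to $\phi_n$, and the converse splits on whether $xb\leqslant p^n$. The two proofs diverge in how the case $xb\leqslant p^n$ is closed and in the quantifier handling. The paper uses the full ``for all $n\geqslant 2$'' hypothesis: if $xb\leqslant p^n$ for all $n$, then in particular $xb\leqslant p^2$, whence $xp\leqslant p^2\neq 0$, and it then cites Lemma 1.11 of \cite{W} to conclude $x\leqslant p$ --- in effect reducing the theorem to its $n=2$ case, Theorem \ref{T-C72}. You instead fix a single $n\geqslant 2$ and carry out the cancellation by hand: from $px\leqslant xb\leqslant p^n$ (using $p\leqslant b$), join-distributivity gives $p(x\vee p^{n-1})=px\vee p^n=p^n=p\cdot p^{n-1}\neq 0$ by non-nilpotence, so the restricted cancellation law, applied exactly as defined in the paper's introduction, forces $x\vee p^{n-1}=p^{n-1}$, i.e.\ $x\leqslant p^{n-1}\leqslant p\leqslant\delta(p)$. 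This buys two things the paper's proof does not have: the argument is self-contained (your join trick is precisely the standard derivation of order-cancellation from equality-cancellation, so no appeal to Wells' Lemma 1.11 is needed), and you in fact prove a strictly stronger statement, namely that for \emph{each single} $n\geqslant 2$, ``$\phi$-$\delta$-primary to $p$ for some $\phi\leqslant\phi_n$'' is already equivalent to ``$\delta$-primary to $p$''; the theorem's ``for all $n\geqslant 2$'' hypothesis is then needed only to select one $n$. What the paper's route buys in exchange is brevity, by delegating the cancellation step to the cited lemma. The sharper intermediate bound $x\leqslant p^{n-1}$ that your argument produces, rather than just $x\leqslant p$, is a harmless bonus.
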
 
     \begin{proof}
     Assume that $b$ is $\delta$-primary to $p$. Then obviously, $b$ is $\phi$-$\delta$-primary to $p$ for every $\phi$ and hence for some $\phi\leqslant \phi_n$, for all $n\geqslant 2$. Conversely, assume that $b$ is $\phi$-$\delta$-primary to $p$ for some $\phi\leqslant\phi_n$ and for all $n\geqslant 2$. Then by Lemma \ref{L-C1}, $b$ is $\phi_n$-$\delta$-primary ($n$-almost $\delta$-primary) to $p$ and for all $n\geqslant 2$. Let $xb\leqslant p$ for $x\in L$. If $xb\nleqslant \phi_n(p)$ for some $n\geqslant 2$ then as $b$ is $\phi_n$-$\delta$-primary to $p$, we have $x\leqslant \delta(p)$ and we are done. So let $xb\leqslant \phi_n(p)$ for all $n\geqslant 2$. 
       Then $xb\leqslant p^n\leqslant p^2$ as $n\geqslant 2$. This implies $xp\leqslant p^2\neq 0$ as $p<b$.  Hence $x\leqslant p\leqslant \delta(p)$ by Lemma 1.11 of $\cite{W}$ and thus $b$ is $\delta$-primary to $p$.        
     \end{proof}
      
     \begin{c1}
     Let $L$ be a Noether lattice. Let $0\neq p\in L$ be a non-nilpotent proper element satisfying the restricted cancellation law. Let $b\in L$ be such that $p<b$.  If $b$ is $\phi_n$-$\delta$-primary to $p$ $(\forall$  $n\geqslant 2)$  then $b$ is $\delta$-primary to $p$.     
     \end{c1}
     \begin{proof}
      The proof follows from proof of the Theorem $\ref{T-C79}$.
      \end{proof}
      
  \begin{d1} 
   An element $b\in L$ is said to be {\bf 2-potent $\delta$-primary} to a proper element $p\in L$ if for all $x\in L$, $xb\leqslant p^2$ implies  $x\leqslant\delta(p)$.
   \end{d1}

   \begin{thm}\label{T-C80}
   Let $b\in L$ be 2-potent $\delta$-primary to a proper element $p\in L$. Then $b$ is $\phi$-$\delta$-primary to $p$ for some $\phi\leqslant\phi_2$ if and only if $b$ is $\delta$-primary to $p$.
      \end{thm} 
      \begin{proof}
      Assume that $b$ is $\delta$-primary to $p$. Then obviously, $b$ is $\phi$-$\delta$-primary to $p$ for every $\phi$ and hence for some $\phi\leqslant \phi_2$. Conversely, assume that $b$ is $\phi$-$\delta$-primary to $p$ for some $\phi\leqslant\phi_2$. Then by Lemma \ref{L-C1}, $b$ is $\phi_2$-$\delta$-primary (almost $\delta$-primary) to $p$. Let $xb\leqslant p$ for $x\in L$. If $xb\nleqslant \phi_2(p)$ then as $b$ is $\phi_2$-$\delta$-primary to $p$, we have $x\leqslant \delta(p)$. If $xb\leqslant \phi_2(p)=p^2$ then as $b$ is 2-potent $\delta$-primary to $p$, we have $x\leqslant\delta(p)$. Hence $b$ is $\delta$-primary to $p$.
   \end{proof}
   
   \begin{c1}
   Let $p\in L$ be a proper element and $b\in L$.
   If $b$ is $\phi_2$-$\delta$-primary to $p$ and $b$ is 2-potent $\delta$-primary to  $p$ then $b$ is $\delta$-primary to $p$.
   \end{c1}
   \begin{proof}
   The proof follows from proof of the Theorem $\ref{T-C80}$.
   \end{proof}
   
    \begin{d1} 
      Let $n\geqslant 2$. An element $b\in L$ is said to be {\bf $n$-potent $\delta$-primary} to a proper element $p\in L$ if for all $x\in L$, $xb\leqslant p^n$ implies  $x\leqslant\delta(p)$. 
      \end{d1} 
      
 Obviously, if an element $b\in L$ is $n$-potent $\delta_0$-primary to a proper element $p\in L$ then $b$ is 2-potent $\delta$-primary to $p$.\\    
      
 The following result is general form of Theorem \ref{T-C80}.     
      
     \begin{thm}\label{T-C3}
      Let $p\in L$ be a proper element and $b\in L$. Then $b$ is $\phi$-$\delta$-primary to $p$ for some $\phi\leqslant\phi_n$ where $n\geqslant 2$ if and only if $b$ is $\delta$-primary to $p$, provided $b$ is $k$-potent $\delta$-primary to $p$ for some $k\leqslant n$.
      \end{thm}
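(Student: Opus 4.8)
The plan is to follow the template established in Theorem \ref{T-C80}, which is exactly the special case $n=k=2$. As with all the earlier biconditional results of this kind, the forward implication is immediate and needs none of the provisos: if $b$ is $\delta$-primary to $p$, then $xb\leqslant p$ already forces $x\leqslant\delta(p)$ regardless of any auxiliary function, so $b$ is $\phi$-$\delta$-primary to $p$ for \emph{every} $\phi$, and hence in particular for some $\phi\leqslant\phi_n$.

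The substantive direction is the converse. First I would invoke Lemma \ref{L-C1} on the hypothesis $\phi\leqslant\phi_n$ to upgrade ``$b$ is $\phi$-$\delta$-primary to $p$'' to ``$b$ is $\phi_n$-$\delta$-primary to $p$'', which is the more convenient form since $\phi_n(p)=p^n$ is explicit. To check that $b$ is $\delta$-primary to $p$, I then take an arbitrary $x\in L$ with $xb\leqslant p$ and split into two cases according to whether $xb$ lies below $p^n$. If $xb\nleqslant p^n$, then $xb\leqslant p$ together with $xb\nleqslant\phi_n(p)$ and the $\phi_n$-$\delta$-primariness of $b$ give $x\leqslant\delta(p)$ at once. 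The remaining case $xb\leqslant p^n$ is where the $k$-potency hypothesis is used.

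The one step that genuinely generalizes Theorem \ref{T-C80} beyond $n=k=2$ is the observation that, since $k\leqslant n$, one has $p^n=p^{\,n-k}\,p^k\leqslant p^k$, because $p^{\,n-k}\leqslant 1$ and multiplication is order-preserving (multiplying $p^k$ by an element below $1$ can only decrease it). Hence $xb\leqslant p^n\leqslant p^k$, and as $b$ is $k$-potent $\delta$-primary to $p$, the defining property of that notion yields $x\leqslant\delta(p)$. In either case $x\leqslant\delta(p)$, so $b$ is $\delta$-primary to $p$. I expect no real obstacle here; the only point requiring any care is the inequality $p^n\leqslant p^k$, and the rest is a routine case split mirroring Theorems \ref{T-C72}, \ref{T-C79} and \ref{T-C80}.
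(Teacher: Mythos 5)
Your proposal is correct and follows essentially the same argument as the paper: forward direction trivial, Lemma \ref{L-C1} to upgrade to $\phi_n$-$\delta$-primary, then a two-case split resolved by $\phi_n$-$\delta$-primariness and $k$-potency respectively, hinging on the inequality $p^n\leqslant p^k$ for $k\leqslant n$. The only cosmetic difference is that you split cases at $xb\leqslant p^n$ while the paper splits at $xb\leqslant p^k$ (using the contrapositive $xb\nleqslant p^k\Rightarrow xb\nleqslant p^n$); the content is identical.
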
 
      \begin{proof}
      Assume that $b$ is $\delta$-primary to $p$. Then obviously, $b$ is $\phi$-$\delta$-primary to $p$ for every $\phi$ and hence for some $\phi\leqslant \phi_n$ where $n\geqslant 2$. Conversely, assume that $b$ is $\phi$-$\delta$-primary to $p$ for some $\phi\leqslant\phi_n$ where $n\geqslant 2$. Then by Lemma \ref{L-C1}, $b$ is $\phi_n$-$\delta$-primary ($n$-almost $\delta$-primary) to $p$. Let $xb\leqslant p$ for $x\in L$. If $xb\nleqslant \phi_k(p)=p^k$ then $xb\nleqslant \phi_n(p)=p^n$ as $k\leqslant n$. Since $b$ is $\phi_n$-$\delta$-primary to $p$, we have $x\leqslant \delta(p)$. If $xb\leqslant \phi_k(p)=p^k$ then as $b$ is $k$-potent $\delta$-primary to $p$, we have $x\leqslant\delta(p)$. Hence $b$ is $\delta$-primary to $p$.
      \end{proof}  
      
   \begin{c1}
    Let $p\in L$ be a proper element and $b\in L$.
      If $b$ is $\phi_n$-$\delta$-primary to $p$ and $b$ is $k$-potent $\delta$-primary to  $p$ where $k\leqslant n$ then $b$ is $\delta$-primary to $p$.
         \end{c1}    
      
  \begin{thm}\label{T-C23}
   Let $p\in L$ be a proper element and $b\in L$ be $\phi$-$\delta$-primary to $p$. If $pb\nleqslant \phi(p)$ then $b$ is $\delta$-primary to $p$.
   \end{thm}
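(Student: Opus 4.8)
The plan is to verify the definition of an element $\delta$-primary to $p$ directly. I would take an arbitrary $x\in L$ with $xb\leqslant p$ and aim to conclude $x\leqslant\delta(p)$, distinguishing two cases according to whether the product $xb$ falls below $\phi(p)$ or not.

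The first case, $xb\nleqslant\phi(p)$, is immediate: since $b$ is $\phi$-$\delta$-primary to $p$ and we have both $xb\leqslant p$ and $xb\nleqslant\phi(p)$, the defining implication gives $x\leqslant\delta(p)$ at once, with no use of the extra hypothesis.

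The substantive case is $xb\leqslant\phi(p)$, and here the assumption $pb\nleqslant\phi(p)$ does all the work. The idea is to replace $x$ by the join $x\vee p$. Recalling the standing convention $\phi(p)\leqslant p$, we have $xb\leqslant\phi(p)\leqslant p$ and $pb\leqslant p$, so by the join-distributivity of the multiplication $(x\vee p)b=xb\vee pb\leqslant p$. On the other hand, if $(x\vee p)b=xb\vee pb$ were below $\phi(p)$, then in particular $pb\leqslant\phi(p)$, contradicting the hypothesis; hence $(x\vee p)b\nleqslant\phi(p)$. Thus $x\vee p$ satisfies $(x\vee p)b\leqslant p$ and $(x\vee p)b\nleqslant\phi(p)$, so the $\phi$-$\delta$-primary property of $b$ yields $x\vee p\leqslant\delta(p)$, whence $x\leqslant x\vee p\leqslant\delta(p)$.

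Combining the two cases shows $x\leqslant\delta(p)$ in every instance of $xb\leqslant p$, which is precisely the assertion that $b$ is $\delta$-primary to $p$. I do not anticipate any real obstacle; the only points requiring a little care are the harmless use of the convention $\phi(p)\leqslant p$ (so that $xb\leqslant\phi(p)$ forces $xb\leqslant p$) and the join-distributive identity $(x\vee p)b=xb\vee pb$, both of which hold in $L$. The essential mechanism is the join trick $x\mapsto x\vee p$, which converts a product lying inside $\phi(p)$ into one that escapes $\phi(p)$ while staying below $p$.
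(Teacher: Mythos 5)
Your proof is correct and follows essentially the same strategy as the paper: split on whether $xb\leqslant\phi(p)$, and in the hard case apply the $\phi$-$\delta$-primary hypothesis to a join that escapes $\phi(p)$. The only (harmless) difference is that the paper joins $x$ with some $d\leqslant p$ satisfying $db\nleqslant\phi(p)$, whereas you simply take $d=p$ itself, which works and is slightly cleaner.
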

   \begin{proof}
   Let $xb\leqslant p$ for $x\in L$. If $xb\nleqslant \phi(p)$ then as $b$ is $\phi$-$\delta$-primary to $p$, we have $x\leqslant \delta(p)$. So assume that $xb\leqslant\phi(p)$. Then as $pb\nleqslant\phi(p)$, we have $db\nleqslant\phi(p)$ for some $d\leqslant p$ in $L$. Also $(x\vee d)b=xb\vee db\leqslant p$ and $(x\vee d)b\nleqslant\phi(p)$. As $b$ is $\phi$-$\delta$-primary to $p$, we have $x\leqslant (x\vee d)\leqslant \delta(p)$ and hence $b$ is $\delta$-primary to $p$.
   \end{proof}
   
   From the Theorem $\ref{T-C23}$, it follows that,
    if an element $b\in L$ is $\phi$-$\delta$-primary to a proper element $p\in L$  but $b$ is not $\delta$-primary to $p$ then $pb\leqslant \phi(p)$ and hence $pb\leqslant p$.

   \begin{c1}
    If an element $b\in L$ is $\phi_0$-$\delta$-primary to a proper element $p\in L$  but $b$ is not $\delta$-primary to $p$ then $pb=0$.
   \end{c1}
   \begin{proof}
   The proof is obvious.
   \end{proof}
   
   \begin{thm}
   Let an element $b\in L$ be $\phi$-$\delta$-primary to a proper element $p\in L$. If $b$ is $\delta$-primary to $\phi(p)$ then $b$ is $\delta$-primary to $p$.
   \end{thm}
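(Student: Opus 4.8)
The plan is to verify the defining condition of ``$\delta$-primary to $p$'' directly, by a case split on whether the product $xb$ sits below $\phi(p)$. So I would begin by fixing an arbitrary $x\in L$ with $xb\leqslant p$, and the goal throughout is to conclude $x\leqslant\delta(p)$.

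First I would dispose of the case $xb\nleqslant\phi(p)$. Here the hypothesis that $b$ is $\phi$-$\delta$-primary to $p$ applies verbatim: by Definition \ref{D-C2}, $xb\leqslant p$ together with $xb\nleqslant\phi(p)$ forces $x\leqslant\delta(p)$, and this case is finished immediately.

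The remaining case is $xb\leqslant\phi(p)$. Here I would bring in the second hypothesis, that $b$ is $\delta$-primary to $\phi(p)$; applying its definition to $x$ yields $x\leqslant\delta(\phi(p))$. The step that links this back to $\delta(p)$ is the standing convention $\phi(p)\leqslant p$ (fixed just after Definition \ref{D-C2}) combined with the monotonicity of the expansion function $\delta$: since $\phi(p)\leqslant p$, condition \textcircled{2} in the definition of an expansion function gives $\delta(\phi(p))\leqslant\delta(p)$, and hence $x\leqslant\delta(\phi(p))\leqslant\delta(p)$.

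Since both cases deliver $x\leqslant\delta(p)$, the element $b$ is $\delta$-primary to $p$, as required. I do not expect any genuine obstacle in this argument; it is essentially a dichotomy argument, and the only places where the structural assumptions of the paper are used are the convention $\phi(p)\leqslant p$ and the monotonicity of $\delta$, both of which are available from the outset.
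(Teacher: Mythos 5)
Your proof is correct and coincides with the paper's own argument: the same dichotomy on whether $xb\leqslant\phi(p)$, using the $\phi$-$\delta$-primary hypothesis in one case and the $\delta$-primary-to-$\phi(p)$ hypothesis plus $\delta(\phi(p))\leqslant\delta(p)$ in the other. You have merely made explicit the monotonicity of $\delta$ and the convention $\phi(p)\leqslant p$, which the paper uses tacitly.
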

   \begin{proof}
   Let $xb\leqslant p$ for $x\in L$. If $xb\nleqslant \phi(p)$ then as $b$ is $\phi$-$\delta$-primary to $p$, we have  $x\leqslant \delta(p)$ and we are done. Now if $xb\leqslant \phi(p)$ then as $b$ is $\delta$-primary to $\phi(p)$, we have $x\leqslant \delta(\phi(p))$. This implies that  $x\leqslant \delta(p)$ because $\phi(p)\leqslant p$ and we are done. 
   \end{proof}
   
       
       
    The following theorem shows that a under certain condition, $b\in L$ is $\phi$-$\delta$-primary to $(p:q)\in L$   if $b$ is $\phi$-$\delta$-primary to  $p\in L$  where $q\in L$.
       
       \begin{thm}
      Let an element $b\in L$ be $\phi$-$\delta$-primary to a proper element $p\in L$. Then $b$ is  $\phi$-$\delta$-primary to $(p:q)$ for all $q\in L$ if $(\phi(p):q)\leqslant \phi(p:q)$ and $(\delta(p):q)\leqslant \delta(p:q)$.
       \end{thm}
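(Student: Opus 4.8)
The plan is to unwind Definition \ref{D-C2} for the target element $(p:q)$ and transport everything back to $p$ via the residuation identities. So I would fix an arbitrary $x\in L$ with $xb\leqslant (p:q)$ and $xb\nleqslant \phi(p:q)$, and aim to conclude $x\leqslant \delta(p:q)$. The key first move is the standard fact that in a multiplicative lattice $(p:q)q\leqslant p$; multiplying $xb\leqslant (p:q)$ by $q$ therefore gives $(xq)b = xbq \leqslant (p:q)q \leqslant p$. This is exactly what lets me feed the element $xq$ into the hypothesis that $b$ is $\phi$-$\delta$-primary to $p$.

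Next I would split on whether $(xq)b\leqslant \phi(p)$. The productive case is $(xq)b\nleqslant \phi(p)$: here $(xq)b\leqslant p$ together with $(xq)b\nleqslant\phi(p)$ and the assumption that $b$ is $\phi$-$\delta$-primary to $p$ yield $xq\leqslant \delta(p)$, whence $x\leqslant(\delta(p):q)$ by the definition of the residual, and finally $x\leqslant \delta(p:q)$ by the hypothesis $(\delta(p):q)\leqslant \delta(p:q)$. The remaining step is to rule out the other case: if $(xq)b\leqslant \phi(p)$, then $(xq)b = xb\cdot q \leqslant \phi(p)$ forces $xb\leqslant(\phi(p):q)\leqslant \phi(p:q)$, using the hypothesis $(\phi(p):q)\leqslant\phi(p:q)$; but this contradicts $xb\nleqslant \phi(p:q)$. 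Hence only the productive case can occur, and $b$ is $\phi$-$\delta$-primary to $(p:q)$.

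I do not expect a serious obstacle here: the whole argument is bookkeeping with the adjunction $yq\leqslant a \iff y\leqslant(a:q)$ together with the two supplied inequalities on the residuals of $\phi(p)$ and $\delta(p)$, which are precisely engineered so that both the $\phi$-threshold and the $\delta$-conclusion descend correctly to $(p:q)$. The only point needing a word of care is that Definition \ref{D-C2} requires the target to be a proper element, so the statement is to be read for those $q$ with $(p:q)<1$ (equivalently $q\nleqslant p$); when $q\leqslant p$ we have $(p:q)=1$ and the assertion is vacuous. I would state this restriction explicitly rather than leave it implicit.
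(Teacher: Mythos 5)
Your proposal is correct and follows essentially the same route as the paper: multiply $xb\leqslant(p:q)$ by $q$ to get $xqb\leqslant p$, rule out $xqb\leqslant\phi(p)$ via the hypothesis $(\phi(p):q)\leqslant\phi(p:q)$, apply the $\phi$-$\delta$-primary hypothesis to get $xq\leqslant\delta(p)$, and descend through $(\delta(p):q)\leqslant\delta(p:q)$. The only differences are presentational: you make explicit the contradiction argument that the paper leaves implicit in its assertion ``$xqb\nleqslant\phi(p)$,'' and you rightly flag that the statement should be read only for $q$ with $(p:q)$ proper, a point the paper glosses over.
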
       
       \begin{proof}
       Let $xb\leqslant (p:q)$ and $xb\nleqslant \phi(p:q)$  for $x\in L$. Then $xqb\leqslant p$ and $xqb\nleqslant \phi(p)$. Now as $b$ is  $\phi$-$\delta$-primary to $p$, we have $xq\leqslant \delta(p)$ which implies $x\leqslant (\delta(p):q)\leqslant \delta(p:q)$ and hence  $b$ is  $\phi$-$\delta$-primary to $(p:q)$. 
       \end{proof} 
       
                     \begin{thm}
                     If an element $b^k\in L$ is $\phi$-$\delta_1$-primary to a proper element $p\in L$ for all $k\in Z_+$ such that $\delta_1(\phi(p))=\phi(\delta_1(p))$ then $b$ is  $\phi$-prime to $\delta_1(p)$ where $b\in L$.
                     \end{thm}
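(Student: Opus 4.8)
The plan is to verify the defining property of ``$b$ is $\phi$-prime to $\delta_1(p)$'' head-on, exploiting that $\delta_1(p)=\sqrt{p}$ together with the compatibility hypothesis $\delta_1(\phi(p))=\phi(\delta_1(p))$, i.e.\ $\phi(\sqrt{p})=\sqrt{\phi(p)}$, to turn every containment into a statement about powers that can be fed into the assumption that each $b^{k}$ is $\phi$-$\delta_1$-primary (equivalently $\phi$-primary) to $p$. So I would begin with an arbitrary $x\in L$ satisfying $xb\leqslant\delta_1(p)$ and $xb\nleqslant\phi(\delta_1(p))$, and aim to deduce $x\leqslant\delta_1(p)$, which is exactly the required conclusion.

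Two reductions carry most of the logic. First, rewriting $\phi(\delta_1(p))=\sqrt{\phi(p)}$, the hypothesis $xb\nleqslant\sqrt{\phi(p)}$ forces $(xb)^{k}\nleqslant\phi(p)$ for every $k\in Z_+$: indeed, if some power $(xb)^{k}\leqslant\phi(p)$, then each compact $c\leqslant xb$ satisfies $c^{k}\leqslant (xb)^k\leqslant\phi(p)$, whence $c\leqslant\sqrt{\phi(p)}$, and joining over all such $c$ (using that $L$ is compactly generated) gives $xb\leqslant\sqrt{\phi(p)}$, a contradiction. Second, from $xb\leqslant\sqrt{p}$ I want a single exponent $k$ with $(xb)^{k}=x^{k}b^{k}\leqslant p$. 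Granting this, the proof closes quickly: since $x^{k}b^{k}\leqslant p$ and $x^{k}b^{k}=(xb)^{k}\nleqslant\phi(p)$, the defining property of $b^{k}$ being $\phi$-$\delta_1$-primary to $p$ (Definition \ref{D-C2}) yields $x^{k}\leqslant\delta_1(p)=\sqrt{p}$, and then $x\leqslant\sqrt{\sqrt{p}}=\sqrt{p}=\delta_1(p)$ by idempotence of the radical (checked once more on the compact elements below $x$). Hence $b$ is $\phi$-prime to $\delta_1(p)$.

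The hard part will be the second reduction, extracting one uniform exponent $k$ with $(xb)^{k}\leqslant p$ out of $xb\leqslant\sqrt{p}$; this is the only place where the possible non-compactness of $xb$ bites, since the definition of $\sqrt{p}$ supplies exponents only for the individual compact elements lying below $xb$. My intended route is to first pass to the compact characterization of $\phi$-prime ($=\phi$-$\delta_0$-primary) given by statement \textcircled{3} of Theorem \ref{T-C26}, so that it suffices to treat compact test elements $r\in L_\ast$ with $rb\leqslant\sqrt{p}$ and $rb\nleqslant\phi(\sqrt{p})$; then, using that $rb\nleqslant\sqrt{\phi(p)}$ selects a compact $c\leqslant rb$ dominated by some product $rs$ with $s\in L_\ast$, $s\leqslant b$, and that finite products of compact elements are compact (a standing hypothesis on $L$), one obtains a genuinely compact element $rs\leqslant\sqrt{p}$ for which a single power does land below $p$. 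I expect the delicate bookkeeping to lie in transferring that power back onto $b^{k}$ so that the $\phi$-$\delta_1$-primary hypothesis on $b^{k}$ can legitimately be invoked; making this transfer airtight in the general compactly generated setting is the crux of the argument.
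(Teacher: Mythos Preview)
Your overall strategy coincides with the paper's: from $xb\leqslant\sqrt{p}$ and $xb\nleqslant\phi(\sqrt{p})$, pass to a power $(xb)^{n}\leqslant p$, rule out $(xb)^{n}\leqslant\phi(p)$ via the compatibility $\delta_1(\phi(p))=\phi(\delta_1(p))$, invoke the $\phi$-$\delta_1$-primary property of $b^{n}$ to obtain $x^{n}\leqslant\sqrt{p}$, and finish with $x\leqslant\sqrt{\sqrt{p}}=\sqrt{p}$. The only divergence is at your ``second reduction'': the paper simply asserts in one line that $xb\leqslant\delta_1(p)$ yields some $n\in Z_+$ with $(xb)^{n}\leqslant p$, and does not pass through compact test elements or Theorem~\ref{T-C26} at all. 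Your concern that the definition of $\sqrt{p}$ furnishes exponents only for compact elements below $xb$ is technically legitimate, but the paper does not share it and treats the step as immediate; so your proposal is essentially the paper's argument together with an (admittedly unfinished) attempt to justify a step the paper takes for granted.
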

                     \begin{proof} Assume that $xb\leqslant \delta_1(p)$ and $xb\nleqslant\phi(\delta_1(p))$ for $x\in L$. Then there exists $n\in Z_+$ such that $x^n\cdot b^n=(xb)^n\leqslant p$. If $(xb)^n\leqslant\phi(p)$ then by hypothesis $xb\leqslant \delta_1(\phi(p))=\phi(\delta_1(p))$, a contradiction. So we must have $x^n\cdot b^n=(xb)^n\nleqslant\phi(p)$. Since  $b^n$ is $\phi$-$\delta_1$-primary to $p$ we have, $x^n\leqslant\delta_1(p)$ and hence $x\leqslant  \delta_1{(\delta_1(p))}=\delta_1(p)$. This shows that $b$ is  $\phi$-prime to $\delta_1(p)$.
                     \end{proof}
                        
   Now we relate idempotent element of $L$ with an element $\phi_n$-$\delta$-primary  $(n\geqslant 2)$ to another element of $L$.
       
       \begin{thm}\label{T-C71}
       If $p$ is an idempotent element of $L$  then $b\in L$ is  $\phi_\omega$-$\delta$-primary to $p$ and hence  $b$ is  $\phi_n$-$\delta$-primary $(n\geqslant 2)$ to $p$.
       \end{thm}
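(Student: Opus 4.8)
The plan is to reduce the statement to an almost trivial vacuity argument, after a single computation of $\phi_\omega(p)$ for an idempotent $p$. Since $p=p^2$, a straightforward induction gives $p^n=p$ for every $n\in Z_+$: assuming $p^n=p$, we get $p^{n+1}=p^n\cdot p=p\cdot p=p^2=p$. Consequently $\phi_\omega(p)=\bigwedge_{n=1}^{\infty}p^n=\bigwedge_{n=1}^{\infty}p=p$. This is the only place where the idempotency hypothesis is actually used, and it is the crux of the whole argument.

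With $\phi_\omega(p)=p$ in hand, I would verify directly from Definition \ref{D-C2} that $b$ is $\phi_\omega$-$\delta$-primary to $p$. The defining implication demands that whenever $xb\leqslant p$ and $xb\nleqslant\phi_\omega(p)$ hold simultaneously for some $x\in L$, then $x\leqslant\delta(p)$. But because $\phi_\omega(p)=p$, the two antecedents read $xb\leqslant p$ and $xb\nleqslant p$ at once, which can never occur. Hence the implication is vacuously satisfied, and $b$ is $\phi_\omega$-$\delta$-primary to $p$ for \emph{every} $b\in L$.

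For the ``hence'' clause I would simply invoke the implication $(c)\Longrightarrow(e)$ of Theorem \ref{T-C22} (equivalently, the forward direction of Corollary \ref{L-C21}): being $\phi_\omega$-$\delta$-primary to $p$ forces $b$ to be $\phi_n$-$\delta$-primary to $p$ for all $n\geqslant 2$. No additional computation is then needed.

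I do not expect a genuine obstacle: the entire content is the observation $\phi_\omega(p)=p$ for idempotent $p$, after which the result collapses to an empty-hypothesis argument combined with an already-proved chain of implications. The only point requiring a touch of care is the tacit assumption that $p$ is proper, which Definition \ref{D-C2} needs in order to speak of an element $\phi$-$\delta$-primary to $p$; I would state the theorem for a proper idempotent $p$ (the case $p=1$ being vacuous in any event) so that the definitions apply cleanly.
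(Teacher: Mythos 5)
Your proof is correct and takes essentially the same route as the paper: both arguments reduce to the computation $\phi_\omega(p)=\bigwedge_{n=1}^{\infty}p^n=p$ for idempotent $p$, conclude that $b$ is $\phi_\omega$-$\delta$-primary to $p$ (the paper leaves the vacuity of the antecedent implicit, which you spell out), and then invoke Theorem \ref{T-C22} for the passage to $\phi_n$-$\delta$-primary, $n\geqslant 2$. Your remark about requiring $p$ to be proper is a reasonable point of care that the paper's statement tacitly assumes.
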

       \begin{proof}
       As $p$ is an idempotent element of $L$, we have $p=p^n$ for all $n\in Z_+$. So $\phi_\omega(p)=p$. Therefore $b$ is  $\phi_\omega$-$\delta$-primary to $p$. Hence $b$ is  $\phi_n$-$\delta$-primary $(n\geqslant 2)$ to $p$, by Theorem \ref{T-C22}.
       \end{proof}
       
       As a consequence of Theorem \ref{T-C71}, we have following result whose proof is obvious.
       
       \begin{c1}
       If $p$ is an idempotent element of $L$  then $b\in L$ is  $\phi_2$-$\delta$-primary to $p$. 
       \end{c1}
       
       However, if $b\in L$ is  $\phi_2$-$\delta$-primary to $p\in L$ then $p$ need not be idempotent as shown in the following example (by taking $\delta$ as $\delta_1$ for convenience).
       
       \begin{e1}\label{E-C23}
       Consider the lattice $L$ of ideals of the ring $R=<Z_8\ , \ +\ ,\ \cdot>$. Then the only ideals of $R$ are the principal ideals (0),(2),(4),(1). Clearly, $L=\{(0),(2),(4),(1)\}$ is a compactly generated multiplicative lattice.
         It is easy to see that the element $(2)\in L$ is $\phi_2$-$\delta_1$-primary to $(4)\in L$ but $(4)$ is not idempotent.
       
     
       \end{e1}
       
       We conclude this paper with the following examples, from which it is clear that,
       \begin{itemize}
                 \item If $b\in L$ is $\phi_2$-$\delta_1$-primary to $p\in L$ then $b$ need not be 2-potent $\delta_0$-primary to $p$.
        
       \item If $b\in L$ is 2-potent $\delta_0$-primary to $p\in L$ and  $b$ is $\phi_2$-$\delta_1$-primary to $p$ then  $b$ need not be prime to $p$.
       \end{itemize}
       
       \begin{e1}
       Consider $L$ as in Example $\ref{E-C22}$. Here the element $(3)\in L$ is $\phi_2$-$\delta_1$-primary to $(6)\in L$ but $(3)$ is not $2$-potent $\delta_0$-primary to $(6)$.
       \end{e1}
       
       \begin{e1}
       Consider $L$ as in Example $\ref{E-C23}$. Here the element $(2)\in L$ is $\phi_2$-$\delta_1$-primary to $(4)\in L$ and $(2)$ is $2$-potent $\delta_0$-primary to $(4)$ but $(2)$ is not prime to $(4)$.
       \end{e1}


\begin{thebibliography}{99}

 \bibitem{A1}
       {Ashok V.~Bingi}, {\it An element $\delta$-primary to another element in multiplicative lattices},  (communicated).
       
 \bibitem{A2}
        {Ashok V.~Bingi}, {\it An element $\phi$-prime to another element and an element $\phi$-primary to another element in multiplicative lattices},  (communicated).      

\bibitem{AAJ}
       {F.~Alarcon, D.~D.~Anderson and C.~Jayaram},
       {\it Some results on abstract commutative ideal theory}, Periodica Mathematica Hungarica, {\bf 30(1)}(1995), 1-26.
              
       \bibitem{MC}
       {C.~S.~Manjarekar and Nitin S.~Chavan}, {\it An element primary to another element},  The Journal of the Indian Mathematical Society, {\bf 71(1-4)}, (2004), 55-60.
       
       \bibitem{MK}
       {C.~S.~Manjarekar and U.~N.~Kandale}, {\it An element weakly primary to another element},  Chinese Journal of Mathematics, (2013), 1-4.  
                   
     \bibitem{MB1}
       {C.~S.~Manjarekar and A.~V.~Bingi},
       {\it $\delta$-primary elements in multiplicative lattices}, International Journal of Advance Research, {\bf 2(6)} (2014), 1-7.
                          
      \bibitem{S0}
       {D.~S.~Culhan}, {\it Associated primes and primal decomposition in modules and lattice modules and their duals}, Ph.~D. Thesis, University of California, Riverside, 2005.
      
       \bibitem{W}
        {J.~Wells}, {\it The restricted cancellation law in a Noether lattice}, Fundamenta Mathematicae, {\bf 3(75)}, (1972), 235-247.      
    \end{thebibliography}
\end{document}